\begin{document}
\title{Expected Length of the Longest Common Subsequence of Multiple Strings}
\author{Ray Li\thanks{Math \& CS Department, Santa Clara University. Email: \url{rli6@scu.edu}.} , William Ren\thanks{Math \& CS Department, Santa Clara University. Email: \url{wren@scu.edu}.} , Yiran
Wen\thanks{Math \& CS Department, Santa Clara University. Email: \url{ywen@scu.edu}.}}
\date{\today}
\maketitle

\begin{abstract}
  We study the generalized Chvátal-Sankoff constant $\gamma_{k,d}$, which represents the normalized expected length of the longest common subsequence (LCS) of $d$ independent uniformly random strings over an alphabet of size $k$. We derive asymptotically tight bounds for $\gamma_{2,d}$, establishing that $\gamma_{2,d} = \frac{1}{2} + \Theta\left(\frac{1}{\sqrt{d}}\right)$. We also derive asymptotically near-optimal bounds on $\gamma_{k,d}$ for $d\ge \Omega(\log k)$. 
\end{abstract}

\section{Introduction}
The Longest Common Subsequence (LCS) is a fundamental measure of the similarity of two or more strings that is important in theory and practice.
A \emph{subsequence} of a string is obtained by removing zero or more characters, and  the \emph{Longest Common Subsequence} (LCS) of $d$ strings $X^1,\dots,X^d$ is the longest subsequence that occurs in all of $X^1,\dots,X^d$.
For $d$ strings $X^1, \dots,X^d$, we let $\LCS(X^1,\dots,X^d)$ denote the length of their LCS. For example $\LCS(0011,0101) = 3$.
Computing the LCS is a textbook application of dynamic programming in computer science \cite{wagner1974string}, and the algorithm has many applications from text processing, to linguistics, to computational biology. As one example, the linux \verb|diff| tool uses a variation of the LCS algorithm.

Chvátal and Sankoff \cite{chvatal1975longest} showed that as $n$ approaches infinity, the normalized expected length of the LCS of two independent uniformly random binary strings converges to a constant.
This limit is known as the Chvátal--Sankoff constant,
\begin{align}
    \gamma\defeq \lim_{n\to\infty}\frac{\E_{X^1,X^2\sim \{0,1\}^n}[\LCS(X^1,X^2)]}{n}
\end{align}
where the expectation is over independent uniformly random binary strings $X^1, X^2$.
Determining $\gamma$ is an open question with a rich history \cite{chvatal1975longest,deken1979some,steele1986longest, paterson1994longest,boutet1999longest,baeza1999new,bundschuh2001analysis,lueker2003,lueker2009improved,bukh2022length,heineman2024improved}. 
Currently the best bounds are roughly 
$ 0.792665\le \gamma \le 0.826280$ \cite{heineman2024improved,lueker2009improved}. 

Table~\ref{tab:bounds} presents a summary of key works that have contributed to establishing bounds on the Chvátal-Sankoff constant. Only some studies offer rigorously proven bounds, while others present estimates.
\begin{table}[h!]
\centering
\caption{History of Bounds and Estimates for the Chvátal-Sankoff Constant, \( \gamma \)}
\label{tab:bounds}
\begin{tabular}{|l|c|c|c|}
\hline
Authors & Year & Proven Bounds & Estimates \\
\hline
Chvátal and Sankoff~\cite{chvatal1975longest} & 1975 & \( 0.697844 \leq \gamma \leq 0.866595 \) & \( \gamma \approx 0.8082 \) \\
Deken~\cite{deken1979some} & 1979 & \( 0.7615 \leq \gamma \leq 0.8575 \) & \\
Steele~\cite{steele1986longest} & 1986 & & \( \gamma \approx 0.8284 \) \\
Dančík and Paterson~\cite{dancik1995longest} & 1995 & \( 0.77391 \leq \gamma \leq 0.83763 \) & \\
Boutet de Monvel~\cite{boutet1999longest} & 1999 & & \( \gamma \approx 0.812282 \) \\
Baeza-Yates et al.~\cite{baeza1999new} & 1999 & & \( \gamma \approx 0.8118 \) \\
Bundschuh~\cite{bundschuh2001analysis} & 2001 & & \( \gamma \approx 0.812653 \) \\
Lueker~\cite{lueker2009improved} & 2009 & \( 0.788071 \leq \gamma \leq 0.82628 \) & \\
Bukh and Cox~\cite{bukh2022length} & 2022 & & \( \gamma \approx 0.8122 \) \\
Heineman et al.  \cite{heineman2024improved} & 2024 & \(  
0.792665 \leq \gamma  \) & \\
\hline
\end{tabular}
\end{table}

There are two natural ways to generalize the Chvátal-Sankoff problem: (1) increase the alphabet size and (2) increase the number of strings. In this way, we may generalize the Chvátal and Sankoff constant by asking for $\gamma_{k,d}$, the (normalized) expected longest common subsequence of $d$ independent uniformly random strings over a size-$k$ alphabet. Formally, let
    \begin{align}
    \gamma_{k,d} = \lim_{n \to \infty } \frac{\E_{X^1,\dots,X^d\sim [k]^n}[\LCS(X^1,\cdots, X^d)]}{n}
    \end{align}
where the expectation is over independent uniformly random strings $X^1,\cdots X^d\sim[k]^n$, where $[k]=\{1,\dots,k\}$. 
By definition, $\gamma = \gamma_{2,2}$.

The generalization to larger alphabet size $k$ is well studied and well understood. This line of work \cite{deken1979some,dancik1994expected,paterson1994longest,baeza1999new} culminated in a beautiful result that $\gamma_{k,2} \to \frac{2}{\sqrt{k}}$ as $k\to\infty$ \cite{kiwi2005expected}, answering a conjecture of Sankoff and Mainville \cite{sankoff1983common}.

We study the generalization to more strings $d$, which is also an important question. Mathematically it is a fundamental generalization of the Chvatal-Sankoff constant. In computer science, it is intimately connected to error-correcting codes list-decodable against deletions \cite{kash2011zero} (see also \cite{GuruswamiW14,guruswami2020optimally,guruswami2022zero}). Specifically, $1-\gamma_{k,d}$ is the maximum fraction of deletions that a positive rate random code can list-decode against with list-size $d-1$. This connection follows from a generalization of a martingale concentration argument shown in \cite{kash2011zero}, and for completeness, we show the connection in Appendix~\ref{app:deletion}. 

Several works have previously considered the generalizing the number of strings $d$, but less is known than for the larger-alphabet generalization.
Jiang and Li \cite{jiang1995approximation} showed that when $d=n$, the expected LCS of $d$ strings is roughly  $\frac{n}{k}$. Dancik \cite{danvcik1998common} showed that, for fixed $d$, $\gamma_{k,d} = \frac{c}{k^{1-1/d}}$ for some constant $c\in[1,e]$, disproving a conjecture of Steele~\cite{steele1986longest} that $\gamma_{k,d}=\gamma_{k,2}^{d-1}$.
Kiwi and Soto~\cite{kiwi2008} established numerical bounds on $\gamma_{k,d}$ for small values of $k$ and $d$. For example, they obtain bounds on $\gamma_{k,d}$ up to $d=14$ for binary alphabet, and up to alphabet size $k=10$ for $d=3$ strings. 
A recent work of Heineman et al. \cite{heineman2024improved} improves  upon \cite{kiwi2008} and establishes stronger numerical bounds. 

\paragraph{Our Contributions.}
We give tight asymptotic bounds on the binary  Chvátal-Sankoff constant as the number of strings increases, showing $\gamma_{2,d} = \frac{1}{2}+\Theta(\frac{1}{\sqrt{d}})$.
\begin{theorem}
There exists constants $0<c_1<c_2$ such that, for all integers $d\ge 2$ we have
\begin{align}
    \frac{1}{2} + \frac{c_1}{\sqrt{d}}\le \gamma_{2,d} \le \frac{1}{2} + \frac{c_2}{\sqrt{d}}
\end{align}
    \label{thm:main}
\end{theorem}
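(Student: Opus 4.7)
The plan is to analyze
\[
N_L := \#\{s \in \{0,1\}^L : s \text{ is a subsequence of } X^i \text{ for every } i = 1, \ldots, d\}
\]
via first and second moments, leveraging the observation that for any fixed $s \in \{0,1\}^L$ and uniformly random $X \in \{0,1\}^n$,
\[
\Pr[s \le X] = \Pr[\mathrm{Bin}(n, 1/2) \ge L],
\]
independent of the identity of $s$. The reason is that in the canonical left-to-right greedy matching of $s$ into $X$, each new position of $X$ advances the $s$-pointer with probability exactly $1/2$ (the uniform $X_j$ matches a deterministically fixed target character of $s$), so the total number of advances is $\mathrm{Bin}(n, 1/2)$. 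Writing $p = \Pr[\mathrm{Bin}(n, 1/2) \ge L]$, this gives $\E[N_L] = 2^L p^d$.

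For the upper bound, I would take $L = n(1/2 + c_2/\sqrt{d})$ and apply Chernoff to obtain $p \le \exp(-2 c_2^2 n/d)$, so $\E[N_L] \le \exp\!\bigl(n \bigl[(1/2 + c_2/\sqrt{d}) \ln 2 - 2 c_2^2\bigr]\bigr)$. For $c_2 > \sqrt{\ln 2}/2$ and large enough $d$ the exponent is negative; Markov then gives $\Pr[\LCS \ge L] = o(1)$, hence $\gamma_{2,d} \le 1/2 + c_2/\sqrt{d}$ asymptotically, and the finitely many small $d$ are absorbed by enlarging $c_2$.

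For the lower bound the plan is a second-moment / Paley--Zygmund argument at $L = n(1/2 + c_1/\sqrt{d})$ with $c_1 < \sqrt{\ln 2}/2$, a regime where $\E[N_L]$ grows exponentially in $n$. The task reduces to bounding $\E[N_L^2] = \sum_{s, s'} \Pr[s, s' \le X]^d$. For a fixed pair $(s, s')$, running the two greedy matchings jointly on the same $X$ produces a coupled walk on the pointers $(t_s, t_{s'})$: at each position either the two current target characters agree (both pointers advance together with probability $1/2$, else neither does) or they disagree (exactly one pointer advances). For typical pairs these two regimes occur with roughly equal frequency, which makes the two final progresses approximately uncorrelated Gaussians centered at $n/2$ with variance $n/4$; a direct bivariate-Gaussian calculation then yields $\Pr[s, s' \le X] \approx p^2$. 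After verifying $\E[N_L^2] = O(\E[N_L]^2)$, Paley--Zygmund gives $\Pr[\LCS \ge L] = \Omega(1)$, and the standard character-wise Azuma concentration of $\LCS$ (fluctuations of order $\sqrt{nd} = o(n/\sqrt{d})$ as $n \to \infty$) upgrades this to $\E[\LCS] \ge L(1 - o(1))$, proving $\gamma_{2,d} \ge 1/2 + c_1/\sqrt{d}$.

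I expect the second-moment estimate to be the main technical obstacle. Near-diagonal pairs---whose joint greedy process has an unusually large fraction of agreeing steps---have $\Pr[s, s' \le X]$ closer to $p$ than to $p^2$ (with $s = s'$ giving exactly $p$), and naive aggregation of these contributions can exceed $\E[N_L]^2$. Controlling them will require some combination of tail bounds on the overlap variable, a sharp estimate of $\Pr[s, s' \le X]$ as a function of that overlap (interpolating between $p$ and $p^2$), and possibly restricting attention to a subfamily of candidate subsequences chosen to keep pairwise correlations small; all of these should be tractable via random-walk and Gaussian computations but constitute the technical core of the argument.
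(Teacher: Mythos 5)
Your upper bound is fine and is, in substance, the same argument as the paper's: the paper unions over all $k^\ell$ candidate common subsequences and bounds $\Pr[w \preceq X]$ via the Guruswami--Wang supersequence count, which is exactly your first moment $\E[N_L]=2^Lp^d$ (your observation that $p$ is exactly a binomial tail for $k=2$, via the left-to-right greedy embedding, is correct and clean).

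The lower bound, however --- which is the actual content of the theorem --- has a genuine gap. Everything rests on showing $\E[N_L^2]=O(\E[N_L]^2)$ at $L=n(1/2+c_1/\sqrt{d})$, and you do not carry this out; you yourself note that near-diagonal pairs (for which $\Pr[s,s'\preceq X]$ interpolates between $p^2$ and $p$) can make the naive aggregation exceed $\E[N_L]^2$, and you offer only speculative remedies (tail bounds on an overlap variable, restricting to a subfamily). This is not a routine verification that can be deferred: the vanilla second moment method is precisely the kind of argument that is known \emph{not} to give tight lower bounds for LCS-type quantities --- the first-moment threshold for $\gamma_{2,2}$ is $\approx 0.866$ while the truth is $\approx 0.812$, and every known lower bound on these constants comes from an explicit matching strategy or dynamic programming rather than from moments --- so whether any weighted or restricted variant yields $1/2+\Omega(1/\sqrt{d})$ uniformly in $d$ is exactly the unresolved technical core of your plan. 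The paper avoids this entirely: it reduces to the diagonal LCS $W_{nd}$ via the Kiwi--Soto lemma and then \emph{constructs} a common subsequence greedily (reveal one fresh bit per string, adopt the majority bit, and keep revealing bits in each minority string until that bit appears), so that the cost per matched symbol is $Z=d+\sum_{i=1}^{Y}W_i$ with $Y$ the minority count and $W_i$ geometric, whence $\E[Z]=d+2\E[Y]=2d-\Theta(\sqrt{d})$; truncation plus Hoeffding converts $nd/\E[Z]$ into a high-probability bound. Your Paley--Zygmund-plus-Azuma endgame is fine as far as it goes, but until the second-moment estimate is actually proved (or replaced by a constructive argument of the paper's type), the lower bound is not established.
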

Our main contribution is the lower bound, which combines a technique of Lueker \cite{lueker2009improved} and Kiwi and Soto \cite{kiwi2008} with a greedy matching strategy.
Our upper bound follows from a counting argument of Guruswami and Wang \cite{GuruswamiW14}, who studied codes for list-decoding deletions.

We also give bounds that are asymptotically near-optimal bounds for larger alphabets. 

\begin{theorem}
There exists constants $c_0,c_1,c_2> 0$ such that, for all integers $d$ and $k$ be integers with $d\ge c_0\log k$, we have
\begin{align}
    \frac{1}{k}\left(1 + \frac{c_1}{\sqrt{d}}\right)\le \gamma_{k,d} \le \frac{1}{k}\left(1 + c_2\sqrt{\frac{\log k}{d}}\right)
\end{align}
\label{thm:k}
\end{theorem}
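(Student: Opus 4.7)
The plan is to prove the lower and upper bounds separately. The lower bound is a clean reduction to the binary case, invoking Theorem~\ref{thm:main} as a black box. The upper bound is a first-moment/union-bound argument in the same spirit as the Guruswami--Wang counting bound that underlies the upper bound of Theorem~\ref{thm:main}.

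\textbf{Lower bound.} Given $X^1,\ldots,X^d\sim[k]^n$, I fix any two alphabet symbols $\{a,b\}\subseteq[k]$ and let $Y^j$ denote the subsequence of $X^j$ obtained by deleting every character not in $\{a,b\}$. Since $Y^j$ is itself a subsequence of $X^j$, any common subsequence of the $Y^j$ is automatically a common subsequence of the $X^j$, so $\LCS(X^1,\ldots,X^d)\ge \LCS(Y^1,\ldots,Y^d)$. The lengths $L_j:=|Y^j|\sim \mathrm{Bin}(n,2/k)$ satisfy $\min_j L_j\ge (2n/k)(1-o_n(1))$ with high probability; conditional on its length each $Y^j$ is a uniform string in $\{a,b\}^{L_j}$, and the $Y^j$ are mutually independent. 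Truncating each $Y^j$ to the common length $\min_j L_j$ and applying Theorem~\ref{thm:main} yields
\begin{align}
\E[\LCS(Y^1,\ldots,Y^d)] \;\ge\; \gamma_{2,d}\cdot (2n/k)(1-o_n(1)) \;\ge\; (1/2+c/\sqrt{d})(2n/k)(1-o_n(1)),
\end{align}
where $c>0$ is the constant from Theorem~\ref{thm:main}. Dividing by $n$ and sending $n\to\infty$ gives $\gamma_{k,d}\ge (1/k)(1+2c/\sqrt{d})$. Note that this argument makes no use of the hypothesis $d\ge c_0\log k$; that hypothesis is needed only for the upper bound.

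\textbf{Upper bound.} I set $\ell=(1+\delta)n/k$ with $\delta=C\sqrt{\log k/d}$ for a constant $C$ to be chosen. For any fixed candidate $y\in[k]^\ell$ and uniform $X\in[k]^n$, left-to-right greedy matching of $y$ into $X$ succeeds iff a sum of $\ell$ iid $\mathrm{Geom}(1/k)$ waiting times is at most $n$, equivalently iff $\mathrm{Bin}(n,1/k)\ge \ell$; Chernoff gives $\Pr[y\text{ is a subsequence of }X]\le \exp(-\Omega(\delta^2 n/k))$. By independence across the $d$ strings, the probability that $y$ is common to all of them is at most $\exp(-\Omega(\delta^2 nd/k))$, and union bounding over the $k^\ell$ choices of $y$ yields
\begin{align}
\Pr[\LCS(X^1,\ldots,X^d)\ge \ell] \;\le\; \exp\bigl(\ell\log k - \Omega(\delta^2 nd/k)\bigr).
\end{align}
Under $d\ge c_0\log k$ with $c_0$ and $C$ chosen sufficiently large (so $\delta\le 1$ and the $\delta^2 d$ term dominates $(1+\delta)\log k$), the exponent is $-\Omega(n\log k/k)$, so $\Pr[\LCS\ge\ell]$ decays with $n$; combined with the trivial bound $\LCS\le n$ this gives $\E[\LCS]\le \ell+o(n)$, hence $\gamma_{k,d}\le (1/k)(1+C\sqrt{\log k/d})$.

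\textbf{Main obstacles.} I do not anticipate any deep obstacles. The two points requiring care are: (i) in the lower bound, controlling the slack from the random projected lengths $L_j$ so that the leading constant from Theorem~\ref{thm:main} is preserved after truncation and expectation; and (ii) in the upper bound, tracking the Chernoff constant carefully so that the $(1+\delta)\log k$ term in the exponent is cleanly absorbed by the $\delta^2 d$ term under $d\ge c_0\log k$ with the chosen $\delta=C\sqrt{\log k/d}$. Both are essentially bookkeeping calculations.
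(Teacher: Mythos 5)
Your proposal is correct and follows essentially the same route as the paper: the lower bound projects each $k$-ary string onto two symbols and invokes Theorem~\ref{thm:main} (this is exactly the paper's Lemma~\ref{lem:main} in Appendix~\ref{app:k}), and the upper bound is the same first-moment union bound over all $k^\ell$ candidate subsequences of length $\ell=(1+\Theta(\sqrt{\log k/d}))n/k$. The only (cosmetic) difference is that you bound $\Pr[y\text{ is a subsequence of }X]$ via greedy matching and a Chernoff bound on $\mathrm{Bin}(n,1/k)$, whereas the paper bounds the same quantity by the Guruswami--Wang superstring count together with entropy estimates; the two computations are equivalent.
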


The lower bound of Theorem~\ref{thm:k} follows from Theorem~\ref{thm:main} by noting that $\gamma_{k,d} \ge \frac{2}{k}\gamma_{2,d}$: random $k$-ary strings of length $n$ typically have binary subsequences of length roughly $\frac{2}{k}n$. (See Appendix~\ref{app:k}).
The upper bound again follows from a counting argument of Gurusuwami and Wang \cite{GuruswamiW14}.

\paragraph{Organization of the paper.}

In Section~\ref{sec:proofsketch}, we illustrate the ideas in our proof by sketching the proof in the binary case, $k=2$. In Section~\ref{sec:prelims}, we present preliminaries for the proofs. In Section~\ref{sec:proof}, we prove Theorem~\ref{thm:main}.

\section{Proof Overview}
\label{sec:proofsketch}

We now sketch the proof of Theorem~\ref{thm:main} binary case, $k=2$.
We start with the lower bound.

\subsection{The Kiwi-Soto Algorithm}
Our first step is to reduce the generalized Chvátal–Sankoff $\gamma_{k,d}$ problem to estimating the expected \emph{Diagonal LCS}.
This approach was considered by Lueker \cite{lueker2003}, who focused on the two-string case ($d=2$) and obtained numerical lower bounds.
It was then generalized by Kiwi and Soto \cite{kiwi2008} (see also \cite{heineman2024improved}) to obtain numerical lower bounds for more strings $d\ge 3$.
We use the same technique to find lower bounds for any number of strings $d$.

Let \(A_1,\ldots, A_d\) be a collection of \(d\) finite binary strings.
Let \(X_1, \ldots, X_d\) be a collection of $d$ independent uniformly random binary strings of length $n$.
For a string $X$, let \(X[i]\) denote the sub-string formed by the first $i$ characters of string $X$.
Lueker (for $d=2$) and Kiwi and Soto (for all $d$) define,
\begin{align}
W_n(A_1, \ldots, A_d) = \mathbb{E}_{X_1,\dots,X_d} \left[ \max_{\substack{i_1 + \cdots + i_d = n}} \LCS(A_1 X_1[1..i_1], \ldots, A_d X_d[1..i_d]) \right].\label{eq:ks}
\end{align}
and show
\begin{align}
\gamma_{2, d} = \lim_{n \to \infty} \frac{W_{nd}(A_1,\dots,A_d)}{n}.
\end{align}
for all fixed strings $A_1,\dots,A_d$.
Leuker and Kiwi and Soto combine this result with a dynamic programming approach to find numerical lower bounds on $\lim_{n\to\infty} \frac{W_{nd}}{n}$, and thus $\gamma_{2,d}$ (and, more generally, $\gamma_{k,d}$).

We take $A_1,\dots,A_d$ to be the empty string.
Define the expected Diagonal LCS as 
\begin{align}
    W_n \defeq \mathbb{E} \left[ \max_{\substack{i_1 + \cdots + i_d = n}} \LCS(X_1[1..i_1], \ldots, X_d[1..i_d]) \right] = W_n( \lambda,\cdots,\lambda),
    \label{eq:wn}
\end{align}
where $\lambda$ denotes the empty string.
By \eqref{eq:ks}, we have
\begin{align}
    \gamma_{2, d} = \lim_{n \to \infty} \frac{W_{nd}}{n}.
    \label{eq:diag}
\end{align}
Intuitively, \eqref{eq:diag} is true because the maximum in \eqref{eq:wn} is obtained when $i_1,i_2,\dots,i_d$ are all roughly $n/d$, so $W_n$ approaches to the expected LCS of $d$ strings of length $n/d$.

\subsection{The Binary Lower Bound and Matching Scheme}
Now we find a lower bound for $\lim_{n\to\infty}\frac{W_{nd}}{n}$, and thus \(\gamma_{2,d}\).
To do this, we find a common subsequence between $d$ random strings  by defining a matching strategy that finds the bits of the common subsequence one at a time.
We track the number of bits we ``consume'' across the $d$ strings, per 1 matched LCS bit.
We show that our greedy matching consumes on average $2d-\Theta(\sqrt{d})$ bits per 1 matched LCS bit, which on average, gives us $\frac{nd}{2d-\Theta(\sqrt{d})} = n(\frac{1}{2} + \Theta(\frac{1}{\sqrt{d}}))$ LCS bits for $nd$ symbols consumed.
These estimates suggest $W_{nd} \ge n(\frac{1}{2} + \Theta(\frac{1}{\sqrt{d}}))$, and thus $\gamma_{2,d} \ge \frac{1}{2} + \Theta(\frac{1}{\sqrt{d}})$, and we then prove this estimate.

We now describe the matching strategy. We match the LCS bit by bit, revealing the random bits as we need them; importantly, because the bits are independently random, we can reveal them in any desired order.
For each LCS bit, we reveal the next bit in each of the $d$ strings. We then take the next LCS bit to be the majority bit, say 0, and find the next 0 in each of the $d$ strings.
The number of bits consumed can be described by a process of repeatedly flipping $d$ fair coins until all coins show the same face. 
We first flip all $d$ coins. 
We keep re-flipping all the coins in the minority until they show the majority face.
For example, suppose we have flipped the $d$ coins and heads appears \( \left\lceil \frac{d}{2} \right\rceil\) times. Then we repeatedly re-flip the \(\left\lfloor \frac{d}{2} \right\rfloor\) coins that landed tails, until each shows heads.
We let $Z$ be the random variable denoting the total number of coin flips, or, equivalently, the total number of bits consumed per 1 LCS bit.

To analyze the expected number of flips, we first consider the random variable $Y$, 
the number of coins in the minority after the first $d$ flips.
In the binary case, it is not hard to compute the expectation of $Y$ explicitly. For example, when $d$ is even we have:
\[
\mathbb{E}[Y] 
= \frac{1}{2^{d}}\left(\sum_{i=0}^{d/2-1} \binom{d}{i} \cdot 2i + \binom{d}{d/2} \cdot (d/2)\right)
= \frac{d}{2^d}\left(2^{d-1} - \binom{d-1}{d/2}\right) 
\approx \frac{d}{2} - \Theta(\sqrt{d})
\]
and a similar computation holds when $d$ is odd.
Intuitively, the estimate $\E[Y]=\frac{d}{2}-\Theta(\sqrt{d})$ makes sense because $Y=d/2 - |d/2-h|$ where $h$ is the number of heads.
The standard deviation of $h$ is $\Theta(\sqrt{d})$, so we ``expect'' $|d/2-h|$ to be $\Theta(\sqrt{d})$, and thus $Y$ to be $d/2-\Theta(\sqrt{d})$.

Now that we have a handle on $Y$, we can study $Z$, the total number of bits consumed for 1 LCS bit.
The number of reflips of each minority coin is a geometric random variable with $p=1/2$. Thus, the expected number of reflips of each minority coin is 2.
Taking into account the conditional expectations, we can show that the expected total number of reflips of minority coins is thus $2\cdot \E[Y] =d - \Theta(\sqrt{d})$.
Adding on the $d$ initial flips, we have
\[
    \E[Z] = d + 2\E[Y] = 2d- \Theta(\sqrt{d}).
\]

\begin{figure}[t]
\centering
\resizebox{1\textwidth}{!}{%
\begin{circuitikz}[>=latex]
\tikzstyle{every node}=[font=\LARGE]
\foreach \i in {0,...,6}{
    \begin{scope}[shift={(0,-\i*1.5)}]

        \draw  (1.75,18.75) rectangle (28.75,17.75);
        \draw (2.75,18.75) -- (2.75,17.75);
        \draw (3.75,18.75) -- (3.75,17.75);
        \draw (4.75,18.75) -- (4.75,17.75);
        \draw (5.75,18.75) -- (5.75,18);
        \draw (5.75,18) -- (5.75,17.75);

        \node at (6.75,18) {.....};

        \ifnum\i<4
          
            \node[text=red, font=\bfseries] at (2.25,18.25) {1};

        \else
           
            \node at (2.25,18.25) {0};
            \node at (3.25,18.25) {0};
            \node[text=red, font=\bfseries] at (4.25,18.25) {1};

        \fi
    \end{scope}
}
\end{circuitikz}
}%
\caption{Our matching strategy for $d=7$ random binary strings. Because all bits are independent, we can reveal the randomness in any order. We generate 7 random bits. Suppose, as illustrated, 4 bits are a $1$, and $Y=3$ are a 0. We reveal more bits in the strings with 0s until we see 1s. Here, in total, to get 1 LCS bit, we revealed the randomness from $Z=13$ bits across the 7 strings.\\
}
\label{fig:Matching Strategy}
\end{figure}
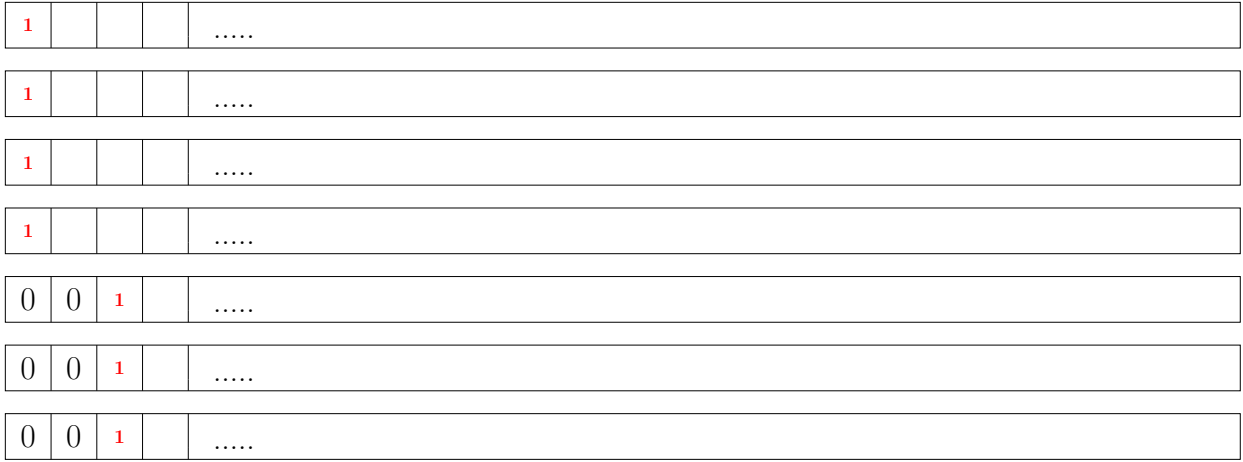

This shows (modulo some details) that our greedy matching strategy consumes $2d-\Theta(\sqrt{d})$ bits per 1 matched bit. Our back-of-the-envelope calculation suggests that, because we have $nd$ bits to consume across the $d$ strings, and we consume an average of $2d-\Theta(\sqrt{d})$ bits per matched bit, we expect to find a common subsequence of length at least $\frac{nd}{2d-\Theta(\sqrt{d})} = n(\frac{1}{2} + \Theta(\frac{1}{\sqrt{d}}))$, as desired.

However, we have to work harder to formally justify this.
Let $Z_1,Z_2,\dots$ be the random variables where $Z_i$ denotes the number of bits needed to consume to match the $i$th bit with our matching strategy.
By carefully choosing the order in which we reveal our bits, we have that $Z_1,Z_2,\dots$ are mutually independent.
Further, the $Z_i$ are identically distributed as $Z$, and thus have expectation $2d-\Theta(\sqrt{d})$.
The number of bits we matched by our strategy is the largest $L$ such that $Z_1+\cdots+Z_L\le nd$.
Importantly, because we work with Diagonal LCS, we do not need to worry that we use a different number of bits in different strings.
To show the expected number bits matched is close to our estimate, we show that, for $L_0=\frac{nd}{\E[Z]} ( 1-o(1))$, 
\begin{align}
    \Pr[Z_1+\cdots+Z_{L_0} \le nd] > 1-o(1).
    \label{eq:zsum}
\end{align}
We cannot use a standard concentration inequality because the $Z_i$ are unbounded. 
However, each $Z_i$ is the sum of at most $d$ geometric random variables.
Thus, setting $Z_i' \defeq \min(Z_i, O_d(\log n))$, we have, with high probability, $Z_i'=Z_i$ for all $i$.
We then use concentration inequalities to show $Z_1'+\cdots+Z_{L_0}'\le nd$ with high probability, and then \eqref{eq:zsum} holds.
Thus, the expected number of bits matched is at least $L_0\cdot (1-o(1)) \ge n(\frac{1}{2}+\Omega(\frac{1}{\sqrt{d}})$.
Hence, we can conclude our bound
\begin{align}
    \gamma_{2,d} \ge \frac{1}{2} + \Omega\left(\frac{1}{\sqrt{d}}\right).
\end{align}

\subsection{The Binary Upper Bound}

The upper bound follows from a counting argument.
Guruswami and Wang \cite[Lemma 2.3]{GuruswamiW14} (Lemma~\ref{lem:gw14}) bound the number of supersequences of any string of length $\ell>\frac{n}{k}$.
By applying this bound and carefully tracking the lower-order terms, we show that for $\Pr[\LCS(X_1,\dots,X_d)\ge \ell]$ is exponentially small for $\ell = \frac{n}{k}(1+\Theta(\sqrt{\frac{\log k}{d}}))$.
Our bound on the expectation follows.

\section{Preliminaries}
\label{sec:prelims}

Throughout $\log$ is base 2 unless otherwise specified, and $\ln$ is log base-$e$.
We use the following result.
\begin{lemma}\label{lem:ConditionalExpectation}
Let $Y,W_1,W_2,\dots$ be independent random variables supported on nonnegative integers where $W_1,W_2,\dots$ are identically distributed. Define \( W = W_1 + W_2 + \cdots + W_Y \). Then, 
\begin{equation}\label{eq:ConditionalExpectation}
\E[W] = \E[Y] \E[W_1].
\end{equation}
\end{lemma}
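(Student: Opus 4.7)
The plan is to prove this Wald-type identity by conditioning on $Y$ and invoking the tower property of expectation. Since $Y$ is independent of $(W_1, W_2, \ldots)$, conditioning on the event $Y = y$ does not change the joint distribution of $(W_1, \ldots, W_y)$. Thus for each nonnegative integer $y$,
\begin{align}
\E[W \mid Y = y] = \E[W_1 + \cdots + W_y] = y \cdot \E[W_1],
\end{align}
where the empty sum is interpreted as zero when $y = 0$, and the second equality uses linearity of expectation together with the fact that the $W_i$ are identically distributed.

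Next I would take the outer expectation. Treating $\E[W_1]$ as a constant,
\begin{align}
\E[W] = \E[\E[W \mid Y]] = \E[Y \cdot \E[W_1]] = \E[Y] \cdot \E[W_1],
\end{align}
which is exactly \eqref{eq:ConditionalExpectation}.

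I do not expect a serious obstacle, as the argument is essentially routine. The only points requiring mild care are: (i) the treatment of the empty sum when $Y = 0$, which is automatic under the convention that $W = 0$ in that case; and (ii) integrability, which is unproblematic because all variables are nonnegative integer valued, so Tonelli's theorem justifies any interchange of sum and expectation, and the identity extends to the infinite case under the convention $0 \cdot \infty = 0$. In every application within this paper, both $\E[Y]$ and $\E[W_1]$ are finite, so no such subtlety arises. An equivalent proof, avoiding conditional expectation, writes $W = \sum_{i \ge 1} W_i \cdot \mathbf{1}[Y \ge i]$, uses independence of $W_i$ from $\mathbf{1}[Y \ge i]$ to factor each term as $\E[W_1] \cdot \Pr[Y \ge i]$, and concludes via the standard identity $\sum_{i \ge 1} \Pr[Y \ge i] = \E[Y]$ for nonnegative integer random variables.
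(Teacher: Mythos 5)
Your proof is correct and follows essentially the same route as the paper's: condition on $Y$, use linearity (and independence) to get $\E[W \mid Y] = Y\,\E[W_1]$, and apply the tower property. The additional remarks on integrability and the alternative indicator-sum argument are fine but not needed here.
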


\begin{proof}
Using the law of total expectation, we have:
\[
\E[W] = \E[\E[W \mid Y]].
\]
By the linearity of expectation, given the value of \(Y\), we obtain:
\[
\E[W \mid Y] = \E[W_1] + \E[W_2] + \cdots + \E[W_Y] = Y \E[W_1].
\]
Substituting this into the equation above, we get:
\[
\E[W] = \E[Y] \E[W_1]. \qedhere
\]
\end{proof}

We also use Hoeffding's Inequality.
\begin{lemma}[Hoeffding]\label{lem:Hoeffding}
 Let \( X_1, X_2, \dots, X_n \) be independent random variables such that \( X_i \in [a_i, b_i] \) almost surely. Then, for any \( t > 0 \),
 \begin{align}\label{eq:HoeffdingIneq}
   \mathbb{P}\left( \sum_{i=1}^n X_i - \mathbb{E}\left[ \sum_{i=1}^n X_i \right] \geq t \right) \leq \exp\left( -\frac{2 t^2}{\sum_{i=1}^n (b_i - a_i)^2} \right).
 \end{align}
\end{lemma}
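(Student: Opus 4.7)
The plan is to use the classical Chernoff-method argument for bounded random variables, as this is the standard route to Hoeffding's inequality. First I would center the variables by setting $Y_i := X_i - \E[X_i]$, so that each $Y_i$ has mean zero and lies almost surely in an interval of width $b_i - a_i$; the $Y_i$ remain mutually independent, and it suffices to upper bound $\Pr[\sum_i Y_i \ge t]$. For any $s > 0$, the exponential Markov inequality together with independence then gives
\[
\Pr\!\left[\sum_{i=1}^n Y_i \ge t\right] \le e^{-st}\,\E\!\left[\exp\!\left(s\sum_{i=1}^n Y_i\right)\right] = e^{-st}\prod_{i=1}^n \E\!\left[e^{sY_i}\right].
\]

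The main technical ingredient, and the step I expect to be the main obstacle, is \emph{Hoeffding's lemma}: for any mean-zero random variable $Y$ with $Y \in [a,b]$ almost surely, one has $\E[e^{sY}] \le \exp(s^2(b-a)^2/8)$ for all real $s$. My approach would be the textbook one: use convexity of $y \mapsto e^{sy}$ to bound, pointwise on $[a,b]$,
\[
e^{sy} \le \frac{b-y}{b-a}\,e^{sa} + \frac{y-a}{b-a}\,e^{sb},
\]
take expectations using $\E[Y] = 0$ to obtain an explicit function of $s$, and then show that the logarithm $\varphi(s)$ of this explicit bound satisfies $\varphi(0) = \varphi'(0) = 0$ and $\varphi''(s) \le (b-a)^2/4$ for every $s$. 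A Taylor expansion around $s=0$ then yields $\varphi(s) \le s^2(b-a)^2/8$, which is exactly the desired inequality. The only mildly delicate calculation is the uniform bound on $\varphi''$, which reduces to the elementary fact that $u(1-u) \le 1/4$ for $u \in [0,1]$ once one writes $\varphi''$ in terms of the tilted mean.

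Finally, combining the two steps and using that $Y_i$ has the same range width $b_i - a_i$ as $X_i$, we get, for every $s > 0$,
\[
\Pr\!\left[\sum_{i=1}^n Y_i \ge t\right] \le \exp\!\left(-st + \frac{s^2}{8}\sum_{i=1}^n (b_i - a_i)^2\right).
\]
Minimizing the right-hand side over $s > 0$ by choosing $s = 4t / \sum_{i=1}^n (b_i - a_i)^2$ gives exactly $\exp\!\left(-2t^2 / \sum_{i=1}^n (b_i - a_i)^2\right)$, which is the claimed bound \eqref{eq:HoeffdingIneq}.
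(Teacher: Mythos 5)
Your proof is the standard and correct Chernoff-method derivation of Hoeffding's inequality (centering, the MGF bound from Hoeffding's lemma, and optimizing $s = 4t/\sum_{i}(b_i-a_i)^2$, which indeed yields the stated exponent $-2t^2/\sum_i (b_i-a_i)^2$). The paper states this lemma as a classical result without proof, so there is nothing to compare against; your argument is the textbook one and is complete in outline.
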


For $p \in (0,1)$, we define the $q$-ary entropy as
\[
H_q(p)=p\log_{q}(q-1)-p\log_{q}(p)-(1-p)\log_{q}(1-p)
\]
where \( h(p) \) is the binary entropy function.
We use the following well known estimate on binomial terms.
\begin{lemma}[see, for example, Proposition 3.3.1 of \cite{guruswami2019essential}]
We have    \label{lem:binom-apx}
\begin{align}
    \binom{m}{pm}(q-1)^{pm} \le q^{H_q(p)m}.
\end{align}
\end{lemma}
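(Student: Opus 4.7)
The plan is to prove this classical estimate via a short probabilistic argument, choosing a product distribution on $[q]^m$ under which every string of a given Hamming weight has the same probability, and then invoking the trivial bound that the probability of any event is at most $1$.

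Concretely, I would let $X = (X_1,\dots,X_m) \in [q]^m$ be a random string whose coordinates are independent, with $X_j = 1$ with probability $1-p$ and $X_j = s$ with probability $\frac{p}{q-1}$ for each $s \in \{2,\dots,q\}$. Any fixed string with exactly $pm$ coordinates different from $1$ has probability $(1-p)^{(1-p)m}\bigl(\tfrac{p}{q-1}\bigr)^{pm}$, and there are exactly $\binom{m}{pm}(q-1)^{pm}$ such strings. Since the event ``exactly $pm$ coordinates of $X$ differ from $1$'' has probability at most $1$,
$$\binom{m}{pm}(q-1)^{pm}\,(1-p)^{(1-p)m}\Bigl(\tfrac{p}{q-1}\Bigr)^{pm} \le 1.$$

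Rearranging the factors of $(q-1)^{pm}$ and $p^{pm}$ yields
$$\binom{m}{pm}(q-1)^{pm} \le (q-1)^{pm}\,p^{-pm}\,(1-p)^{-(1-p)m}.$$
Taking $\log_q$ of both sides produces $pm \log_q(q-1) - pm\log_q p - (1-p)m\log_q(1-p)$ on the right, which is precisely $H_q(p)\,m$ by the definition of the $q$-ary entropy. Exponentiating gives the stated bound $\binom{m}{pm}(q-1)^{pm} \le q^{H_q(p)m}$.

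There is no real obstacle here: once the right product measure is written down, the argument is two lines. The only minor care needed is the bookkeeping of the $(q-1)^{pm}$ factor, which appears both as the multinomial weight on the left and inside the definition of $H_q$, and must be tracked to see that the two cancel in the expected way. An alternative plan—direct estimation of $\binom{m}{pm}$ using Stirling and then absorbing $(q-1)^{pm}$ into the $q$-ary entropy—would also work but is more tedious; the probabilistic route above is cleaner and more in line with the style of \cite{guruswami2019essential}.
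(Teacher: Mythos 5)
The paper does not prove this lemma; it cites it directly from Proposition 3.3.1 of \cite{guruswami2019essential}. Your probabilistic argument is correct and is essentially the standard textbook proof of that proposition (it is equivalent to expanding $1=\bigl((1-p)+(q-1)\cdot\tfrac{p}{q-1}\bigr)^m$ and keeping only the terms corresponding to strings with exactly $pm$ non-$1$ coordinates), and the bookkeeping of the $(q-1)^{pm}$ factor and the final $\log_q$ computation both check out against the paper's definition of $H_q(p)$.
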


We use the following estimate for $k$-ary entropy.
\begin{lemma}[see, for example, Proposition 3.3.5 of \cite{guruswami2019essential}]
For small enough $\epsilon \in (0,\frac{1}{k})$: 
\[
H_k \left( 1 - \frac{1}{k} - \epsilon \right) \leq 1 - c_k\cdot  \epsilon^2.
\]
for constant $c_k\ge\frac{k^2}{4(k-1)\ln k}\ge \frac{k}{4 \ln k}$. 
\label{lem:h-apx}
\end{lemma}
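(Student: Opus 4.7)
The plan is to Taylor expand $H_k$ around its maximum and read off the quadratic coefficient. Let $p^\ast := 1 - 1/k$. A direct computation shows that $H_k(p^\ast) = 1$: substituting into the definition, the $p^\ast \log_k(k-1)$ term combines with $-p^\ast \log_k((k-1)/k)$ to give $p^\ast$, and the $-(1/k)\log_k(1/k)$ term contributes $1/k$, summing to $1$. So $H_k$ attains $1$ at $p^\ast$, and any bound of the form $H_k(p^\ast - \epsilon) \le 1 - c_k \epsilon^2$ must come from expanding about this maximum.

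Next I would compute the first two derivatives:
\[
H_k'(p) = \log_k\frac{(k-1)(1-p)}{p}, \qquad H_k''(p) = -\frac{1}{p(1-p)\ln k}.
\]
Plugging in $p = p^\ast$ gives $(k-1)(1-p^\ast)/p^\ast = 1$, so $H_k'(p^\ast) = 0$; and $H_k''$ is strictly negative on $(0,1)$, confirming that $p^\ast$ is the unique maximum. Evaluating the second derivative at $p^\ast$ gives $H_k''(p^\ast) = -k^2/((k-1)\ln k)$, which already matches the target constant up to a factor of two.

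Applying Taylor's theorem with Lagrange remainder to second order, for each $\epsilon \in (0, p^\ast)$ there is some $\xi \in (p^\ast - \epsilon, p^\ast)$ with
\[
H_k(p^\ast - \epsilon) = 1 - \frac{\epsilon^2}{2\xi(1-\xi)\ln k}.
\]
Since $\xi(1-\xi)$ is continuous and equals $(k-1)/k^2$ at $\xi = p^\ast$, for $\epsilon$ small enough (depending on $k$) we have $\xi(1-\xi) \le 2(k-1)/k^2$ uniformly over the relevant interval, which yields $H_k(p^\ast - \epsilon) \le 1 - \frac{k^2}{4(k-1)\ln k}\cdot \epsilon^2$. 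The weaker bound $c_k \ge k/(4\ln k)$ then follows from $k^2/(k-1) \ge k$.

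There is no real obstacle here; the only mildly delicate point is specifying how small $\epsilon$ must be so that the Taylor remainder point $\xi$ stays in a region where $\xi(1-\xi) \le 2(k-1)/k^2$. A crude threshold such as $\epsilon \le c/k$ for a sufficiently small absolute constant $c$ suffices, and since the lemma only asserts existence of such a threshold ("for small enough $\epsilon$"), I would not optimize it.
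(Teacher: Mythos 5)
Your proof is correct. The paper does not prove this lemma at all --- it simply cites Proposition 3.3.5 of the Guruswami--Rudra--Sudan book --- and your second-order Taylor expansion of $H_k$ about its maximizer $p^\ast = 1-\tfrac{1}{k}$, with $H_k'(p^\ast)=0$, $H_k''(\xi) = -\tfrac{1}{\xi(1-\xi)\ln k}$, and a continuity bound $\xi(1-\xi)\le \tfrac{2(k-1)}{k^2}$ for $\epsilon$ small, is exactly the standard argument behind the cited result. The only caveat, which you already flag, is that the admissible range of $\epsilon$ depends on $k$ (e.g.\ $\epsilon \le \tfrac{k-1}{k^2}$ suffices, with the minor observation that for small $k$ the interval may cross $\tfrac12$, where one instead uses $\xi(1-\xi)\le\tfrac14\le\tfrac{2(k-1)}{k^2}$), and the lemma's ``for small enough $\epsilon$'' accommodates this.
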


As described in Section~\ref{sec:proofsketch}, define the expected diagonal LCS
\begin{align}
W_n  \defeq  \E \left[\max_{i_1+\cdots+i_d=n} \LCS(X^1[1\cdots i_1], \cdots, X^d[1\cdots i_d]) \right ] 
\label{eq:prefix}
\end{align}
where the randomness is over uniformly random infinite binary strings $X^1,\dots,X^d$.
The following lemma shows that the diagonal LCS equals the expected LCS up to lower order terms.
\begin{lemma}[\cite{kiwi2008}]
\label{lem:ks}
We have
\[
\gamma_{k,d}=\lim_{n \to \infty} \frac{W_{nd}}{n}
\]
\label{eq:kiwisoto}
\end{lemma}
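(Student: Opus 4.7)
The plan is to sandwich $W_{nd}/n$ between two quantities tending to $\gamma_{k,d}$. The lower bound is immediate: taking $i_1=\cdots=i_d=n$ inside the max in \eqref{eq:prefix} gives
\[ W_{nd} \ge \E[\LCS(X^1[1..n],\ldots,X^d[1..n])], \]
and the right-hand side divided by $n$ tends to $\gamma_{k,d}$ by definition, so $\liminf_n W_{nd}/n \ge \gamma_{k,d}$.

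The real work is the matching upper bound. Set $f(i_1,\ldots,i_d) \defeq \E[\LCS(X^1[1..i_1],\ldots,X^d[1..i_d])]$ and $L_m \defeq f(m,\ldots,m)$. I would first establish the deterministic estimate
\[ f(i_1,\ldots,i_d) \le \frac{\gamma_{k,d}}{d}\,(i_1+\cdots+i_d) \]
via a symmetrization argument resting on three ingredients. First, $f$ is coordinatewise superadditive, $f(\mathbf{i}+\mathbf{j}) \ge f(\mathbf{i})+f(\mathbf{j})$, because optimal common subsequences of the prefix and of the subsequent block concatenate to a common subsequence of the combined strings, and the blocks use disjoint, independent bits. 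Second, $f$ is symmetric, $f(i_{\sigma(1)},\ldots,i_{\sigma(d)}) = f(i_1,\ldots,i_d)$ for every $\sigma \in S_d$, by exchangeability of the $X^j$ and symmetry of $\LCS$. Third, the same concatenation shows $L_m$ is superadditive, so Fekete combined with the defining limit $L_m/m \to \gamma_{k,d}$ yields $L_m \le \gamma_{k,d}\,m$ for every $m$. Summing the superadditivity inequality iteratively over all $d!$ permutations of $(i_1,\ldots,i_d)$ gives
\[ d!\cdot f(i_1,\ldots,i_d) \;=\; \sum_{\sigma\in S_d} f(i_{\sigma(1)},\ldots,i_{\sigma(d)}) \;\le\; f\Bigl(\textstyle\sum_\sigma i_{\sigma(1)},\,\ldots,\,\sum_\sigma i_{\sigma(d)}\Bigr) \;=\; L_{(d-1)!\,m}, \]
where $m=\sum_j i_j$ and I used that each $i_j$ appears exactly $(d-1)!$ times in any fixed coordinate as $\sigma$ ranges over $S_d$. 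Applying Fekete to $L_{(d-1)!\,m}$ yields $f(\mathbf{i}) \le \gamma_{k,d}\,m/d$, and in particular $f(\mathbf{i}) \le \gamma_{k,d}\,n$ when $m = nd$.

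To turn the bound on $f$ into a bound on $W_{nd} = \E[\max_{\sum i_j = nd} \LCS]$, I would apply Azuma--Hoeffding (Lemma~\ref{lem:Hoeffding}) to each split: the LCS is $1$-Lipschitz in the $nd$ independent symbols it depends on, so
\[ \Pr\bigl[\,\LCS(X^1[1..i_1],\ldots,X^d[1..i_d]) - f(i_1,\ldots,i_d) \ge t\,\bigr] \le \exp\!\bigl(-2t^2/(nd)\bigr). \]
Choosing $t = C d\sqrt{n\log n}$ for a sufficiently large constant $C$ and union bounding over the $O((nd)^{d-1})$ compositions of $nd$ into $d$ nonnegative parts, with probability $1-o(1)$ every split simultaneously satisfies $\LCS \le f(\mathbf{i}) + o(n) \le \gamma_{k,d}\,n + o(n)$. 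On the complementary event the trivial bound $\max_{\text{split}} \LCS \le \min_j i_j \le n$ makes the contribution to $\E[\max \LCS]$ negligible, so $W_{nd} \le \gamma_{k,d}\,n + o(n)$. This yields $\limsup_n W_{nd}/n \le \gamma_{k,d}$ and closes the sandwich.

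The step I expect to be most delicate is the symmetrization deduction, because one must iterate the pairwise superadditivity $d!-1$ times across permuted coordinate vectors and verify that the marginal sums collapse exactly to the balanced point on the simplex. The concentration step is then standard once that deterministic bound is in hand.
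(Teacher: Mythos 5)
Your proof is correct, but note that the paper does not prove this lemma at all --- it is imported verbatim from Kiwi and Soto \cite{kiwi2008} --- so what you have written is a self-contained derivation rather than a variant of an in-paper argument. The structure is sound: the lower bound by restricting to the balanced split is immediate, and the upper bound via the symmetrized superadditivity inequality
\[
d!\,f(\mathbf{i})=\sum_{\sigma\in S_d} f(\sigma\mathbf{i})\le f\Bigl(\sum_{\sigma}\sigma\mathbf{i}\Bigr)=L_{(d-1)!\,m}\le \gamma_{k,d}\,(d-1)!\,m
\]
is exactly right; the step you flagged as delicate goes through because each coordinate of $\sum_\sigma \sigma\mathbf{i}$ collapses to $(d-1)!\,m$ by a direct count, and iterating the two-term inequality over any bracketing of the $d!$ summands is order-independent. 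Two small points to tighten. First, the concentration step cannot invoke Lemma~\ref{lem:Hoeffding} as stated, since the LCS of fixed prefixes is not a sum of independent variables; you need the bounded-differences (Azuma/McDiarmid) form, which the paper does state as Azuma's inequality in Appendix~\ref{app:deletion} --- the $1$-Lipschitz property you assert justifies the bounded differences of the Doob martingale obtained by revealing symbols one at a time, and the exponent $\exp(-t^2/(2nd))$ (with the martingale normalization) still beats the $O((nd)^{d-1})$ union bound for $t=Cd\sqrt{n\log n}$. Second, you implicitly use that the limit defining $\gamma_{k,d}$ exists and equals $\sup_m L_m/m$; this is itself Fekete applied to the superadditive sequence $L_m$, which is consistent with your argument but worth stating explicitly since it is the source of the inequality $L_m\le \gamma_{k,d}\,m$ for every finite $m$. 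Neither point is a gap in the mathematics.
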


\section{Full Proof of the $k$-ary LCS}
\label{sec:proof}
\subsection{Theorem~\ref{thm:main}, lower bound}

\begin{proof}[Proof of Theorem~\ref{thm:main}, lower bound]
By Lemma~\ref{lem:ks}, it suffices to show that there is an absolute constant $c_1>0$ such that, for sufficiently large $n$,
\begin{align}
    W_{nd} = \E \left[\max_{i_1+\cdots+i_d=nd} \LCS(X^1[1\cdots i_1], \cdots, X^d[1\cdots i_d]) \right ]  \ge n\left(\frac{1}{2} + \frac{c_1}{\sqrt{d}}\right)    
\end{align}

We now present our greedy matching strategy for finding a long ``diagonal'' common subsequence --- a common subsequence of $X^1[1\cdots i_1], \cdots, X^d[1\cdots i_d]$ for $i_1+\cdots+i_d=nd$.
Given $d$ random infinite strings $X^1,\dots,X^d$, we find a  the LCS bit by bit, revealing the random bits of $X^1,\dots,X^d$ as we need them.
Importantly, because the bits are independently random, we can reveal them in any desired order. Use the following process:
\begin{enumerate}  
    \item Initialize a string $s$ to the empty string, representing our common subsequence of $X^1,\dots,X^d$.
    \item Repeat the following
    \begin{enumerate}
        \item Reveal the next unrevealed bit $b_1,\dots,b_d$ in each of $X^1,\dots,X^d$.
        \item Let $b$ be the majority bit among these $d$ bits. 
        \item For each string $X^j$ that did not reveal the majority bit ($b_i\neq b$), reveal bits of $X^j$ until we reveal a bit equal to $b$. 
        \item If number of revealed bits is at most $nd$, append $b$ to $s$, else exit.
        \end{enumerate}
\end{enumerate}
See Figure~\ref{fig:Matching Strategy} for an illustration of this process.
The length of the subsequence we find is the number of times we successfully complete the loop.

Let $Y$ denote the random variable that denotes the number of minority bits among $d$ uniformly random bits.
Let $Z$ denote the random variable that first samples $Y$, and is set to $d + W_1+\cdots+W_Y$, where $W_1,\dots,W_Y$ are independent geometric random variables with probability 1/2.
Because the bits are independent, the number of bits revealed in each iteration of the loop is distributed as $Z$.
Thus, letting $Z_1,Z_2,\cdots,$ be i.i.d random variables distribute as $Z$, the length of our LCS is distributed as
\begin{align}
    L_{greedy} \defeq \max(L: Z_1+\cdots+Z_L\le nd)
    \label{eq:Lgreedy}
\end{align}
We wish to lower bound $\E[L_{greedy}]$.

We start by analyzing the expectations of $Y$ and $Z$. Explicit calculations yield that, for all $d$, 
\begin{align}
    \E[Y] \le \frac{d}{2} - c\sqrt{d} 
    \label{eq:y}
\end{align}
for some absolute constant $c>0$. To see this, note that for $d$ even,
\begin{align}
\mathbb{E}[Y] 
&= \frac{1}{2^{d}}\left(\sum_{i=0}^{d/2-1} \binom{d}{i} \cdot 2i + \binom{d}{d/2} \cdot (d/2)\right) \nonumber\\
&= \frac{1}{2^{d}}\left(\sum_{i=0}^{d/2-1} 2d\cdot \binom{d-1}{i-1}  + d\cdot \binom{d-1}{d/2-1}\right) \nonumber\\
&= \frac{d}{2^d}\left(2^{d-1} - \binom{d-1}{d/2}\right) \nonumber\\
&\le \frac{d}{2} - c\sqrt{d}
\end{align}
and for $d$ odd,
\begin{align}
\mathbb{E}[Y] 
&= \frac{1}{2^{d}}\left(\sum_{i=0}^{(d-1)/2} \binom{d}{i} \cdot 2i\right) \nonumber\\
&= \frac{1}{2^{d}}\left(\sum_{i=0}^{(d-1)/2} 2d\cdot\binom{d-1}{i-1}\right) \nonumber\\
&= \frac{d}{2^d}\left(2^{d-1} - \binom{d-1}{(d-1)/2}\right)  \nonumber\\
&\le \frac{d}{2} - c\sqrt{d}
\end{align}
where $c>0$ is some absolute constant.
Thus, \eqref{eq:y} holds.
By Lemma~\ref{lem:ConditionalExpectation} we have $\E[Z]\le d + 2\E[Y] = d-2c\sqrt{d}$.

Let \( L_0 = \frac{nd}{\mathbb{E}[Z]} \left(1 - \gamma\right) \) for $\gamma=\frac{1}{100\log n}$.
We show that the sum \( \sum_{i=1}^{L_0} Z_i \) is less than \( nd \) with very high probability, so that $L_{greedy}\ge L_0$ with very high probability.
This follows from concentration inequalities, but we cannot apply the inequalities directly because our random variables $Z_i$ are unbounded.
Define truncated variables \( Z_i' = \min(Z_i, T) \) for $T=100 d \log n$, so that each $Z_i'$ is in $[0,T]$.

We show that all $Z_i=Z_i'$ with high probability.
In step 2(c), for each $X^j$, we see the correct bit $b$ within $99\log n$ steps with probability at least $1-\frac{1}{n^{99}}$.
By the union bound, this happens for all $j=1,\dots,d$ with probability at least $1-\frac{d}{n^{99}}$, in which case $Z_i\le d+99d\log n < T$ and $Z_i=Z_i'$.
Thus, union bounding over $i=1,\dots,L_0$, we have 
\begin{align}
    \Pr[\text{$Z_i=Z_i'$ for all $i=1,\dots,L_0$}]
    \ge 1-nd\cdot \frac{d}{n^{99}}\ge 1-\frac{1}{n^{97}}.
    \label{eq:zi}
\end{align}

Since $Z_1',\dots, Z_{L_0}'$ are independent, Hoeffding's inequality (Lemma~\ref{lem:Hoeffding}) implies
\[
\mathbb{P}\left[\sum_{i=1}^{L_0} Z_i' > \mathbb{E}\left[\sum_{i=1}^{L_0} Z_i'\right] + t \right] \leq \exp\left(-\frac{2t^2}{\sum_{i=1}^{L_0} T^2}\right),
\]
where $t = nd - \mathbb{E}\left[\sum_{i=1}^{L_0} Z_i'\right] = nd-L_0\cdot \E[Z'] \ge \gamma nd$. 
Substituting $t$ gives,
\begin{align}
\mathbb{P}\left[Z_1' + \dots + Z_{L_0}' > nd\right] \leq \exp\left(-\frac{\gamma^2n^2d^2}{T^2\cdot L_0}\right) \le \exp\left(-\Omega_{d}\left(\frac{n}{\log^3n}\right)\right).
\label{eq:concentrate}
\end{align}
Combining \eqref{eq:zi} and \eqref{eq:concentrate}, we have, for sufficiently large $n$
\[
\mathbb{P}\left[Z_1 + \dots + Z_{L_0} > nd\right]\le \mathbb{P}\left[Z_1' + \dots + Z_{L_0}' > nd\right] + \Pr[Z_i\neq Z_i' \text{ for some $i$}] \le \frac{2}{n^{97}}.
\]
Finally, the expected LCS length after \( nd \) bits is:
\begin{align}
\E[L_{greedy}]
&\ge\mathbb{E}\left[\max(L:Z_1 + \dots + Z_{L} \leq nd)\right] \nonumber\\
&\ge
L_0\cdot \Pr[Z_1+\cdots+Z_{L_0}\le nd]\nonumber\\
&\geq \frac{nd}{\mathbb{E}[Z]}\left(1-\gamma\right)\cdot \left(1-\frac{2}{n^{97}}\right) \nonumber\\
&\ge n\left(\frac{1}{2} + \frac{c_1}{\sqrt{d}}\right)
\end{align}
for some absolute constant $c_1>0$. Hence,
\begin{align}
\gamma_{2,d} 
&= \lim_{n \to \infty} \frac{W_{nd}}{n} \nonumber\\
&= \lim_{n \to \infty}\frac{\E[L_{greedy}]}{n} \nonumber\\
&\ge \frac{1}{2} \left( 1 + \frac{c_1}{\sqrt{d}} \right).\qedhere\nonumber
\end{align}

\end{proof}

\subsection{Theorem~\ref{thm:main}, upper bound}

We use the following lemma from \cite{GuruswamiW14} that counts superstrings of a string of a given length.
\begin{lemma}[Lemma 2.3 of \cite{GuruswamiW14}]
For any string $w$ of length $\ell>\frac{n}{k}$, the number of strings of length $n$ with $w$ as a subsequence is at most\footnote{The result in \cite{GuruswamiW14} is stated for $\ell>(1-1/k)n$, and states that there are at most $\sum_{t=\ell}^n\binom{t-1}{\ell-1}k^{n-t}(k-1)^{t-\ell}$ subsequences. However, this bound comes from a counting argument and actually holds for all $\ell$. For $\ell > n/k$, the summands increase with $t$, so bounding each summand by the $t=n$ summand gives the bound stated here.}
\label{lem:gw14}
\[
n\cdot \binom{n-1}{\ell-1}(k-1)^{n-\ell}.
\]
\end{lemma}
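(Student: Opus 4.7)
The plan is to prove the bound in two steps. Writing $N(w)$ for the number of strings in $[k]^n$ containing $w$ as a subsequence, I would first establish the exact summation identity
\[
N(w) = \sum_{t=\ell}^n \binom{t-1}{\ell-1}(k-1)^{t-\ell}k^{n-t}
\]
by a canonical-embedding counting argument (this is essentially the original content of the Guruswami--Wang lemma). Then, under the hypothesis $\ell > n/k$, I would bound the sum by $n$ times its final summand using monotonicity of the summands in $t$.

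For the first step, every supersequence $x$ of $w = w_1\cdots w_\ell$ has a unique leftmost embedding $1 \le p_1 < \cdots < p_\ell \le n$, defined greedily by letting $p_i$ be the smallest index exceeding $p_{i-1}$ with $x_{p_i} = w_i$. For each tuple of positions $(p_1,\ldots,p_\ell)$, the strings $x$ whose canonical embedding equals this tuple are exactly those satisfying $x_{p_i} = w_i$ for every $i$, $x_j \ne w_1$ for $j < p_1$, $x_j \ne w_{i+1}$ for $p_i < j < p_{i+1}$, and any character at positions $j > p_\ell$ (any earlier occurrence of $w_i$ would contradict minimality of $p_i$). Conditioning on $t = p_\ell$, there are $\binom{t-1}{\ell-1}$ choices for the remaining embedding positions, $(k-1)^{t-\ell}$ choices for the constrained non-embedding positions in $\{1,\ldots,t\}$, and $k^{n-t}$ free choices for the trailing positions. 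Summing over $t \in \{\ell,\ldots,n\}$ yields the identity.

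For the second step, I would compute the ratio of consecutive summands,
\[
\frac{S_{t+1}}{S_t} = \frac{t}{t-\ell+1}\cdot\frac{k-1}{k},
\]
and observe that it is at least $1$ precisely when $t \le k(\ell-1)$. In the regime $\ell > n/k$ this holds throughout the summation range, so each summand is at most the final term $S_n = \binom{n-1}{\ell-1}(k-1)^{n-\ell}$, and since the sum has at most $n$ terms the claimed upper bound $n\binom{n-1}{\ell-1}(k-1)^{n-\ell}$ follows. The main places requiring care are verifying the tight characterization of strings with a given canonical embedding in the first step (so that the counting argument neither over- nor under-counts) and checking the boundary of the monotonicity inequality for integer $\ell$ close to $n/k$; both are routine bookkeeping rather than substantive obstacles.
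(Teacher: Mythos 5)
Your two-step plan is exactly the route the paper takes: it quotes the exact identity $\sum_{t=\ell}^n\binom{t-1}{\ell-1}(k-1)^{t-\ell}k^{n-t}$ from Guruswami--Wang and then, in the footnote, asserts that for $\ell>n/k$ the summands increase with $t$, so each is at most the $t=n$ term. Your reconstruction of the identity via leftmost embeddings is correct and complete, and your ratio computation $S_{t+1}/S_t=\frac{t}{t-\ell+1}\cdot\frac{k-1}{k}$ is right.

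The gap is the sentence ``in the regime $\ell>n/k$ this holds throughout the summation range.'' Monotonicity up to $t=n$ requires $t\le k(\ell-1)$ for every $t\le n-1$, i.e.\ $\ell\ge 1+\frac{n-1}{k}$, and for $k\ge 3$ this is strictly stronger than $\ell>n/k$: whenever $n\bmod k\ge 2$, the minimal integer $\ell$ with $\ell>n/k$ violates it, the summands peak at $t\approx k(\ell-1)<n$, and the final term is not the largest. This is not merely a failure of the method --- the stated bound itself fails at the boundary. For $k=10$, $n=19$, $\ell=2$ the exact count is $10^{19}-9^{19}-19\cdot 9^{18}\approx 5.797\times 10^{18}$, while the claimed bound is $19\cdot\binom{18}{1}\cdot 9^{17}=342\cdot 9^{17}\approx 5.704\times 10^{18}$. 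So the ``boundary bookkeeping'' you deferred as routine is precisely where the argument breaks; you (and the paper's footnote, which contains the identical gap) need either the stronger hypothesis $\ell\ge 1+\frac{n-1}{k}$ or some slack in the conclusion. The issue is harmless for the paper's application, where $\ell=\frac{n}{k}(1+\varepsilon)$ with $\varepsilon$ bounded away from $0$, so that $k(\ell-1)\ge n$ for all large $n$ and your monotonicity step is valid.
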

\begin{proof}[Proof of Theorem~\ref{thm:main}, upper bound]
    With hindsight, let $c_0=16$, and let $\ell = \frac{n}{k}(1 + \varepsilon)$ where $\varepsilon = 4\cdot \sqrt{\frac{\ln k}{d}}$.
    Assume $c_0\log k < d$, so that $\varepsilon<1$.
    By Lemma~\ref{lem:gw14}, we have, for all strings $w$ of length $\ell$,
    \begin{align}
        \Pr[X^1,\ldots,X^d \text{ have $w$ as a subsequence}]
        \le \left(\frac{ n\cdot \binom{n}{n-\ell}  (k-1)^{n-\ell}} {k^{n}}\right)^d.
    \end{align}
    By a union bound over all strings of length $\ell$, for $c_k=\frac{k}{4\ln k}$ from Lemma~\ref{lem:h-apx}, we have
    \begin{align}
        \Pr[\LCS(X^1,\ldots,X^d)\ge \ell] 
        &\leq k^{\ell} \cdot \left(\frac{ n\cdot \binom{n}{n-\ell}  (k-1)^{n-\ell}} {k^{n}}\right)^d \nonumber\\
        &\le n^d\cdot k^\ell \cdot \left(\frac{k^{nH_k(1-1/k-\varepsilon/k)}}{k^n} \right)^d \nonumber\\
        &\le n^d\cdot k^\ell \cdot \left(\frac{k^{n(1-c_k(\varepsilon/k)^2)}}{k^n} \right)^d \nonumber\\
        &\le n^d\cdot k^{2n/k} \cdot \left(\frac{k^{n(1-c_k(\varepsilon/k)^2)}}{k^n} \right)^d \nonumber\\
        &= n^d k^{-2n/k} < k^{-n/k}.
    \end{align}
    The second inequality uses Lemma~\ref{lem:binom-apx} and the definition of $\ell$.
    The third inequality uses Lemma~\ref{lem:h-apx}.
    The fourth inequality follows from $\varepsilon<1$.
    The equality follows from plugging in $c_k$.
    Our bound on the expectation follows.
    \begin{align}
        \E[\LCS(X^1,\ldots,X^d)]
        &\le \ell\cdot \Pr[\LCS(X^1,\ldots,X^d)< \ell] + n\cdot \Pr[\LCS(X^1,\ldots,X^d)\ge \ell] \nonumber\\
        &\le \ell\cdot 1 + n\cdot k^{-n/k} \nonumber\\
        &\le \ell + o(1)
    \end{align}
    Taking the limit $n\to\infty$, we conclude $\gamma_{k,d}\le \frac{1+\varepsilon}{k}$, as desired.
\end{proof}

\section*{Acknowledgements}
RL, WR, and YW are supported by NSF grant CCF-2347371.

\bibliographystyle{unsrt}
\bibliography{bib}

\appendix

\section{Binary lower bounds implies $k$-ary}
\label{app:k}

The $k$-ary lower bound in Theorem~\ref{thm:k} follows from the binary lower bound in Theorem~\ref{thm:main} because of the following lemma.
\begin{lemma} \label{lem:main}
    $\gamma_{k,d}\ge\frac{2}{k}\gamma_{2,d}$.
\end{lemma}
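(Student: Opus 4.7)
The plan is to extract a binary substructure from each $k$-ary random string and then apply the binary bound (Theorem~\ref{thm:main}, or really just the existence of the limit $\gamma_{2,d}$). For each $j\in[d]$, let $S^j$ denote the subsequence of $X^j$ consisting of the characters in $\{1,2\}$, re-interpreted as a binary string. Any common subsequence of $S^1,\ldots,S^d$ is automatically a common subsequence of $X^1,\ldots,X^d$, so
\[
\LCS(X^1,\ldots,X^d) \ge \LCS(S^1,\ldots,S^d).
\]
Thus it suffices to lower bound the expected LCS of the $S^j$.

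Next, I would establish two distributional facts about the $S^j$. First, $|S^j|\sim\mathrm{Bin}(n,2/k)$, so by Chernoff and a union bound, for any fixed $\epsilon>0$ and $m=\lfloor(2/k-\epsilon)n\rfloor$, the event $E=\{|S^j|\ge m \text{ for all } j\}$ has probability $1-o(1)$. Second, a symmetry argument: conditional on the set of positions in $X^j$ whose symbols lie in $\{1,2\}$, those symbols are i.i.d.\ uniform over $\{1,2\}$. Hence, conditional on $|S^j|=t$, the string $S^j$ is uniformly distributed on $\{1,2\}^t$. In particular, conditional on $|S^j|\ge m$, the prefix $S^j[1..m]$ is uniform on $\{1,2\}^m$. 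Combined with the independence of $X^1,\ldots,X^d$, this shows that conditional on $E$, the prefixes $S^1[1..m],\ldots,S^d[1..m]$ are $d$ independent uniformly random binary strings of length $m$.

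Putting this together, I would write
\[
\E[\LCS(X^1,\ldots,X^d)] \ge \E\!\left[\LCS(S^1[1..m],\ldots,S^d[1..m])\,\mathbf{1}_E\right] \ge \bigl(\gamma_{2,d}-o(1)\bigr)\,m\cdot\bigl(1-o(1)\bigr),
\]
where the last inequality uses the definition of $\gamma_{2,d}$ as $\lim_{m\to\infty}\E[\LCS]/m$ for $d$ i.i.d.\ uniform binary strings of length $m$. Dividing by $n$ and taking $n\to\infty$ yields $\gamma_{k,d}\ge (2/k-\epsilon)\gamma_{2,d}$, and since $\epsilon>0$ was arbitrary, $\gamma_{k,d}\ge \tfrac{2}{k}\gamma_{2,d}$.

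I do not expect any serious obstacles here. The only step that requires care is bookkeeping the conditioning in the symmetry argument to justify that, on the event $E$, the truncated prefixes really are i.i.d.\ uniform binary. Once that is done, concentration of binomials and the definition of $\gamma_{2,d}$ finish the proof cleanly.
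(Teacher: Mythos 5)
Your proof is correct and follows essentially the same route as the paper: both extract the $\{1,2\}$-subsequences, use binomial concentration to show their lengths are at least roughly $\tfrac{2}{k}n$ with high probability, and then invoke the binary constant $\gamma_{2,d}$ on those (prefixes of the) extracted strings. If anything, your explicit symmetry argument that the conditional prefixes are i.i.d.\ uniform binary strings, and your use of $(\gamma_{2,d}-o(1))m$ rather than $\gamma_{2,d}n_0$ outright, make the step the paper states tersely slightly more careful.
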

\begin{proof}
    Consider $d$ random strings $X^1,\dots,X^d$ over alphabet $[k]$.
    Let $Y^1,\dots,Y^d$ be the subsequences of $X^1,\dots,X^d$ consisting of symbols $\{1,2\}$.
    By standard concentration arguments, the lengths $|Y^1|,\dots,|Y^d|$ are all at least $n_0=\frac{2}{k}n-O_k(n^{2/3})$ with high probability $1-2^{\Omega_k(n^{1/3})}$.
    Conditioned on the lengths $|Y^1|,\dots,|Y^d|$ all being at least $n_0$, the expected LCS of $Y^1,\dots,Y^d$ is at least $\gamma_{2,d}\cdot n_0$.
    Thus, 
    \begin{align}
       \E[\LCS(X^1,\dots,X^d)] 
       &\ge \E[\LCS(Y^1,\dots,Y^d)] \\
       &\ge \E[\LCS(Y^1,\dots,Y^d)\,\mid\,|Y^1|,\dots,|Y^d|\ge n_0] \cdot \Pr[|Y^1|,\dots,|Y^d|\ge n_0]\\
       &\ge \gamma_{2,d}\cdot n_0 \cdot \left(1-2^{-\Omega(n^{1/3})}\right) \\
       &\ge \frac{2}{k}\gamma_{2,d} \cdot n \cdot (1-o(1))
    \end{align}
    Thus, $\gamma_{k,d}\ge \frac{2}{k}\gamma_{2,d}$
\end{proof}

\section{List-decoding against deletions}
 \label{app:deletion}
We connect the generalized Chvátal--Sankoff constant to list-decoding against deletions.
The connection uses Azuma's inequality.
\begin{lemma}[Azuma's inequality]
    Let \(Z_1, Z_2, \cdots, Z_n\) be a martingale with bounded differences, i.e., \(|Z_{i+1}-Z_i| \leq c\) for some constant $c$. Then, for any \(\varepsilon \geq 0\),\[
    \Pr\left( |Z_n - \mathbb{E}[Z_n]| \geq \epsilon \right) \leq 2 \exp \left( -\frac{\epsilon^2}{2 n c^2} \right).
    \]
\end{lemma}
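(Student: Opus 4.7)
The plan is to prove Azuma's inequality by the standard Chernoff-style argument applied to the martingale difference sequence, combined with Hoeffding's lemma to bound moment generating functions of bounded, conditionally mean-zero increments.

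First I would set up the problem by defining the differences $D_i = Z_i - Z_{i-1}$ for $i=1,\dots,n$, so that by the martingale property $\mathbb{E}[D_i \mid Z_1,\dots,Z_{i-1}] = 0$, and by the bounded-differences hypothesis $|D_i|\le c$. Writing $Z_n - \mathbb{E}[Z_n] = \sum_{i=1}^n D_i - (Z_0 - \mathbb{E}[Z_n])$, or more cleanly centering by $Z_0$ (which we may assume equals $\mathbb{E}[Z_n]$ without loss of generality by translating the martingale), our goal reduces to tail-bounding $\sum_{i=1}^n D_i$.

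The next step is the Chernoff-bound reduction: for any $\lambda>0$,
\[
\Pr\!\left[\sum_{i=1}^n D_i \ge \varepsilon\right] \le e^{-\lambda\varepsilon}\,\mathbb{E}\!\left[\exp\!\left(\lambda \sum_{i=1}^n D_i\right)\right].
\]
To handle the moment generating function I would peel off one factor at a time using the tower property, conditioning on $Z_1,\dots,Z_{n-1}$ to pull out $\mathbb{E}[e^{\lambda D_n}\mid Z_1,\dots,Z_{n-1}]$. The key technical ingredient is Hoeffding's lemma: if $X$ is a random variable with $\mathbb{E}[X]=0$ and $X\in[a,b]$ almost surely, then $\mathbb{E}[e^{\lambda X}]\le e^{\lambda^2(b-a)^2/8}$. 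Applied conditionally with $a=-c$, $b=c$, this yields the pointwise bound $\mathbb{E}[e^{\lambda D_i}\mid Z_1,\dots,Z_{i-1}]\le e^{\lambda^2 c^2/2}$. Iterating $n$ times gives $\mathbb{E}[\exp(\lambda\sum_i D_i)]\le e^{n\lambda^2 c^2/2}$.

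Combining, $\Pr[\sum D_i\ge \varepsilon]\le \exp(-\lambda\varepsilon + n\lambda^2 c^2/2)$, and optimizing over $\lambda$ by setting $\lambda = \varepsilon/(nc^2)$ produces the one-sided bound $\exp(-\varepsilon^2/(2nc^2))$. The two-sided statement follows by applying the same argument to the martingale $-Z_n$ and taking a union bound, introducing the factor of $2$. The main (really the only) obstacle is proving Hoeffding's lemma itself, which is a short convexity argument: for $X\in[-c,c]$ with mean zero, write $e^{\lambda x}$ as a convex combination of $e^{-\lambda c}$ and $e^{\lambda c}$ using $x = \frac{c-x}{2c}(-c) + \frac{c+x}{2c}(c)$, take expectations, and bound the resulting function of $\lambda$ by $e^{\lambda^2 c^2/2}$ via a Taylor expansion of its logarithm. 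Since this lemma is classical, I would likely cite it rather than reproduce the calculation.
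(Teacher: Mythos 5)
Your proof is correct and is the canonical Chernoff-plus-Hoeffding's-lemma argument; the paper itself states Azuma's inequality as a classical result and gives no proof, so there is nothing to diverge from. The only point worth flagging is that centering at $\mathbb{E}[Z_n]$ (equivalently at $Z_0$) requires the initial term of the martingale to be deterministic — which holds in the paper's application, where $Z_0=\mathbb{E}[\mathrm{LCS}(X^1,\dots,X^d)]$ — and your "translate so that $Z_0=\mathbb{E}[Z_n]$" step is implicitly using this; otherwise all steps (the mgf peeling via the tower property, the conditional Hoeffding lemma bound $e^{\lambda^2c^2/2}$, the optimization $\lambda=\varepsilon/(nc^2)$, and the union bound for the two-sided version) are exactly right.
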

A code is a subset of $[k]^n$.
A random code $C$ is obtained by sampling independent uniformly random strings from $[k]^n$.
For $p\in(0,1)$ and an integer $d\ge 2$, a code $C$ is $(p,d-1)$ list-decodable against deletions if any $d$ strings $X^1,\dots,X^d\in C$ satisfy $\LCS(X^1,\dots,X^d)< (1-p)n$.

The first result in Proposition~\ref{pr:code} says that random codes of positive rate ($|C|\ge 2^{\Omega(n)}$) are list-decodable against deletions with radius $p=1-\gamma_{k,d}-\varepsilon$.
The second result says that random codes even of constant size at not list-decodable against deletions with radius $1-\gamma_{k,d}+\varepsilon$.
Thus, $1-\gamma_{k,d}$ is the maximum fraction of deletions that a positive rate random code list-decodes against with list-size $d$.
\begin{proposition}
    For all $\varepsilon > 0$, there exists a constant $c>0$ such that a random code $C\subset [k]^n$ of size $|C|\ge 2^{cn}$ is $(1-\gamma_{k,d}-\varepsilon,d-1)$ list-decodable against deletions.
    Furthermore, a random code of size $d$, with high probability, not $(1-\gamma_{k,d}+\varepsilon,d-1)$ list-decodable against deletions.
    \label{pr:code}
\end{proposition}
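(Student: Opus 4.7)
\medskip

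\noindent\textbf{Proof proposal for Proposition~\ref{pr:code}.}
The plan is to use Azuma's inequality to show that, for any fixed choice of $d$ indices into a code, $\LCS(X^1,\dots,X^d)$ of the corresponding (independent, uniformly random) codewords is tightly concentrated around $\gamma_{k,d}\cdot n$, and then combine this with a union bound (for the first statement) or directly (for the second statement).

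The concentration step is the common ingredient. Fix $d$ independent uniformly random strings $X^1,\dots,X^d\in[k]^n$, and let $Z_0,Z_1,\dots,Z_{dn}$ be the Doob martingale of $\LCS(X^1,\dots,X^d)$ obtained by revealing the $dn$ symbols one at a time (say, in the order $X^1_1,X^1_2,\dots,X^1_n,X^2_1,\dots,X^d_n$). Changing any single symbol of any $X^j$ changes the LCS by at most $1$, since any common subsequence of the old strings remains a common subsequence of the new strings after removing at most one matched position. Hence $|Z_i-Z_{i-1}|\le 1$ for every $i$, and Azuma's inequality gives
\begin{align}
\Pr\bigl[|\LCS(X^1,\dots,X^d)-\E[\LCS(X^1,\dots,X^d)]|\ge \tfrac{\varepsilon}{2}n\bigr]
\le 2\exp\!\left(-\frac{\varepsilon^2 n^2/4}{2dn}\right)
=2\exp\!\left(-\frac{\varepsilon^2 n}{8d}\right).
\end{align}
By the definition of $\gamma_{k,d}$, for all sufficiently large $n$ we have $|\E[\LCS]-\gamma_{k,d}n|\le \tfrac{\varepsilon}{2}n$, so the event $|\LCS-\gamma_{k,d}n|\ge \varepsilon n$ is also exponentially unlikely.

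For the first statement, I would pick any $d$-tuple of distinct indices $i_1,\dots,i_d$ into the random code $C$ of size $N=|C|=2^{cn}$, and apply the concentration bound to the (independent, uniform) codewords $X^{i_1},\dots,X^{i_d}$ to get
\begin{align}
\Pr[\LCS(X^{i_1},\dots,X^{i_d})\ge (\gamma_{k,d}+\varepsilon)n]\le 2\exp(-\varepsilon^2 n/(8d)).
\end{align}
A union bound over the at most $N^d=2^{cdn}$ such $d$-tuples yields failure probability $\le 2^{cdn+1}\exp(-\varepsilon^2 n/(8d))$, which tends to $0$ as long as we choose $c<\varepsilon^2/(16d^2\ln 2)$. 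This gives the desired list-decodability.

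For the second statement, a random code of size $d$ is simply a single $d$-tuple $X^1,\dots,X^d$ of independent uniform codewords. The same Azuma bound, applied in the opposite direction, gives $\LCS(X^1,\dots,X^d)\ge (\gamma_{k,d}-\varepsilon)n$ with probability $1-o(1)$, which is exactly the statement that the code fails to be $(1-\gamma_{k,d}+\varepsilon,d-1)$ list-decodable. The main obstacle to watch out for is the Lipschitz claim for the multi-string LCS; although it is intuitive, I would write out the argument that one symbol change modifies the LCS by at most $1$ (by exhibiting a modified common subsequence of the perturbed strings), because without it the Azuma bound does not apply. Everything else reduces to a careful choice of constants and a union bound.
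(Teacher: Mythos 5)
Your proposal is correct and follows essentially the same route as the paper: a Doob/Azuma concentration bound for $\LCS(X^1,\dots,X^d)$ around $\gamma_{k,d}n$, a union bound over $d$-tuples with $c$ chosen small relative to $\varepsilon^2/d$ for the first claim, and the reverse tail for the size-$d$ code in the second claim. If anything your version is slightly more careful on two points the paper glosses over: revealing one symbol at a time makes the bounded-difference constant genuinely $1$ (the paper reveals $d$ symbols per step, where the safe Lipschitz bound is $d$), and you explicitly account for the gap between $\E[\LCS]$ and $\gamma_{k,d}n$ at finite $n$.
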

\begin{proof}
    With hindsight, choose $c=\varepsilon^2/10d$.
    We construct the code \(C\) as a set of \( 2^{cn} \) independent random strings, each of length \( n \), drawn from the alphabet \( [k] \). 
    We consider the longest common subsequence (LCS) of \( d \) codewords \( X^1, X^2, \dots, X^d \) from \( C \).
    
    The length of the LCS, LCS\((X^1, X^2, \dots, X^d) \), can be treated as a \textit{martingale sequence} by revealing the symbols one at a time.
   Define \( Z_i \) as the expected value of the LCS length given the first \( i \) symbols of each sequence \( X^1, \dots, X^d \):
   \[
   Z_i = \mathbb{E}[LCS(X^1, \dots, X^d) \mid X^1[1, \dots, i], \dots, X^d[1, \dots, i]].
   \]
   Here, \( Z_0, Z_1, \dots, Z_n \) forms a martingale, where 
   \begin{align}
       Z_0 &= \mathbb{E}[LCS(X^1, \dots, X^d)] \nonumber\\
       Z_n &= LCS(X^1, X^2, \dots, X^d).
   \end{align}
   Further, this martingale has bounded difference $|Z_{i+1}-Z_i|\le 1$.
    By Azuma's inequality, for any \( \epsilon > 0 \), we have:
   \begin{align}
   \Pr\left( |LCS(X^1, X^2, \dots, X^d) - \gamma_{k,d} n| \geq \epsilon n \right)
   = \Pr[|Z_n-Z_0|\ge \varepsilon n] \leq 2 \exp\left( -\frac{\epsilon^2 n}{2} \right).
   \label{eq:azuma}
   \end{align}
   This result implies that, with high probability, the LCS length is close to its expected value \(\gamma_{k,d} n\). With large enough $n$, the probability that LCS exceeds \(\gamma_{k,d} n\) is exponentially small. Thus, for each individual set of \( d \) codewords,
   \[
   \Pr\left( LCS(X^1, X^2, \dots, X^d) > (\gamma_{k,d} + \epsilon) n \right) \leq 2 \exp\left( -\frac{\epsilon^2 n}{2} \right).
   \]
   By the union bound, the probability that any $d$-tuple of codewords in \(C\) violates this bound is at most \[
   |C|^d \cdot 2 \exp\left( -\frac{\epsilon^2 n}{2} \right) \le 2^{-\Omega(n)}.
   \]
   Thus, with high probability, $LCS(X^1, X^2, \dots, X^d) \leq ( \gamma_{k,d} + \epsilon) n$
   for all codewords \( X^1, X^2, \dots, X^d \in C \), and our code is $(1-\gamma_{k,d}-\varepsilon,d-1)$ list-decodable against deletions.

   To show the second result, note that, by \eqref{eq:azuma}, for $d$ independent random strings $X^1,\dots,X^d$
    \[
   \Pr\left( LCS(X^1, X^2, \dots, X^d) > (\gamma_{k,d} - \epsilon) n \right) \geq 1-2 \exp\left( -\frac{\epsilon^2 n}{2} \right).
   \]
   so a random code of size $d$ is \emph{not} $(1-\gamma_{k,d}+\varepsilon,d-1)$ list-decodable with high probability. 
\end{proof}

\end{document}